\newtheorem{lemma}{Lemma}
\newtheorem{proposition}[lemma]{Proposition}
\newtheorem{theorem}[lemma]{Theorem}
\newtheorem{definition}[lemma]{Definition}
\newcommand{\wilde}{\widetilde}
\newcommand{\btau}{\bar{\tau}}
\newcommand{\R}{\mathbf R}
\newcommand{\lin}{\mathcal{L}}
\newcommand{\mcal}{\mathcal{M}}
\newcommand{\eps}{\varepsilon}
\newcommand{\ubar} {\underline}
\newcommand{\bfa}{\vec{a}}
\newcommand{\bfb}{\mathbf{b}}
\begin{document}

\title[Doubly periodic solitons for the MCF]{Doubly periodic self-translating surfaces for the mean curvature flow}
\author{Xuan Hien Nguyen}
\address{Department of Mathematics, Iowa State University, Ames, IA 50011} 
\email{xhnguyen@iastate.edu}
\subjclass[2000]{Primary 53C44}
\keywords{mean curvature flow, self-translating, solitons} 

\maketitle

\begin{abstract}
We construct new examples of self-translating surfaces for the mean curvature flow from a periodic configuration with finitely many grim reaper cylinders in each period. Because this work is an extension of the author's article on the desingularization of a finite family of grim reaper cylinders, we simply discuss the ideas of the construction and here prove only that the periodic configuration has the necessary flexibility. These examples show that self-translating surfaces do not necessarily have quadratic volume growth rate in contrast to self-shrinking surfaces.  
\end{abstract}

\section{introduction}
A surface $M \subset \R^3$ is said to be a \emph{self-translating solution} or \emph{soliton} if it satisfies 
	\begin{equation}
	\label{eq:soliton}
	H - \vec e_{y} \cdot \nu = 0,
	\end{equation}
where $H$ is the mean curvature, $\nu$ is the unit normal vector such that $\mathbf H = H \nu$ and $\mathbf H$ is the mean curvature vector. Without loss of generality, we have chosen the velocity of the translation to be $\vec e_y$, the third coordinate vector. As the name indicates, a surface $M$ is self-translating if and only if the $t$-time slice of a mean curvature flow starting at $M$ is $M_t = M + t \vec e_y$.

The cylinder over the grim reaper given by 
		\begin{equation}
		\label{eq:Gamma}
		\wilde \Gamma = \{ (x, y, z) \in (-\pi/2, \pi/2) \times \R^2 \mid y = -\ln(\cos x) \}
		\end{equation}
is an example of self-translating surface in $\R^3$. Let us denote by $\wilde \Gamma_n$ the grim reaper cylinder $\wilde \Gamma$ shifted by $(\tilde b_n, \tilde c_n, 0)$, for real numbers $\tilde b_n$ and $\tilde c_n$. With a slight abuse of language, we will drop the term ``cylinder" and also call $\wilde \Gamma$ and its translated apparitions grim reapers. Other examples of solitons include rotationally symmetric surfaces \cite{altschuler-wu;translating-surfaces}, a non-graphical genus zero rotationally symmetric soliton \cite{clutterbuck-schnurer-schulze}, and examples desingularizing the intersection of a grim reaper and a plane \cite{mine;tridents} or a finite family of grim reapers in general position \cite{mine;self-trans}. In higher dimensions, the existence of convex non $k$-rotationally symmetric self-translating surfaces in $\R^n, n \geq k$, are given by Wang \cite{wang;convex-solutions}.  

In this paper, we show the existence of a family of doubly periodic self-translating surfaces. The period in the $\vec e_x$ direction can be taken to be any real number, except $ \pm \frac{\pi}{q}$, $q \in \mathbf N$. 
\begin{theorem}
\label{thm:cor}
Given a vector $\bfa = (a_x, a_y, 0) \in \R^3$ with $a_x \neq 0$ and $a_x \neq  \pm \frac{\pi}{q}$, $q \in \mathbf N$, there exists a one parameter family of surfaces $\{\wilde M\}_{\btau \in (0, \delta_{\btau})}$, with $\delta_{\bar \tau}>0$ depending only on $\bfa$ such that $\wilde M_{\btau}$ is a  surface satisfying \eqref{eq:soliton} that is invariant under the translation by $\bfa$ and periodic of period $2 \pi \bar\tau$ in the $z$-direction. 
\end{theorem}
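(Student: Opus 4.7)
The strategy is to adapt the author's earlier desingularization construction for finite families of grim reaper cylinders \cite{mine;self-trans} to a doubly periodic setting. The architecture has four moves. First, I would choose an $\bfa$-periodic configuration of grim reapers $\{\wilde \Gamma_n\}$ with centers $(\tilde b_n, \tilde c_n)$ in the $xy$-plane whose union is invariant under translation by $\bfa$. The arithmetic condition $a_x \neq \pm \pi/q$ ensures that iterating translation by $\bfa$ never carries a grim reaper back onto itself and that neighboring grim reapers meet transversally, so that one period cell contains finitely many grim reapers intersecting in finitely many lines parallel to $\vec e_z$. Second, along each intersection line within the fundamental domain I would glue in a Scherk-type desingularizing surface of size $\btau$ and period $2\pi\btau$ in the $z$ direction, producing an approximate soliton $M^{\text{app}}_{\btau}$ that is smooth, $\bfa$-invariant and $2\pi\btau$-periodic in $z$.

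Third, the exact soliton would be sought as a small normal graph $u$ over $M^{\text{app}}_{\btau}$. Equation~\eqref{eq:soliton} becomes a nonlinear elliptic PDE of the form $\lin_{\btau} u = E_{\btau} + Q(u)$ on the quotient surface, where $\lin_{\btau}$ is the linearization of the soliton operator, $E_{\btau}$ is the error produced by the gluing, and $Q(u)$ is quadratic in $u$ and its first two derivatives. In the weighted H\"older spaces used in \cite{mine;self-trans}, $\lin_{\btau}$ is invertible modulo a finite-dimensional space of obstructions concentrated in the Scherk regions, and the size estimates on $E_{\btau}$ and $Q(u)$ carry over to the quotient with only cosmetic changes.

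The central point---and the step I expect to be the main obstacle---is the fourth move, namely proving that the doubly periodic configuration has the \emph{flexibility} needed to absorb these obstructions. Because the configuration must remain $\bfa$-invariant, the positions $(\tilde b_n, \tilde c_n)$ and the local gluing parameters are no longer freely assignable: they live in a finite-dimensional parameter space per period. I would establish that the natural linear map from this parameter space to the obstruction space is surjective. Concretely, this reduces to a finite linear-algebra computation describing how perturbing one grim reaper in the fundamental domain (and propagating the perturbation by $\bfa$-translation) alters the force balance at each Scherk region on the quotient. The excluded values $a_x = \pm \pi/q$ are precisely those for which this finite map degenerates because neighboring grim reapers can line up into parallel families, and it is this degeneracy that the hypothesis on $\bfa$ rules out.

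Once the flexibility lemma is in hand, the contraction-mapping argument of \cite{mine;self-trans}, applied on the quotient by the $\bfa$- and $2\pi\btau \vec e_z$-actions, produces the family $\{\wilde M_{\btau}\}_{\btau \in (0,\delta_{\btau})}$ and completes the proof of Theorem~\ref{thm:cor}.
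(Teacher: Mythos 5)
Your outline reproduces the architecture of the paper's Theorem~\ref{thm:main} (periodic configuration, Scherk gluing, linearized equation modulo obstructions, flexibility, fixed point), but for the statement at hand the paper's proof consists precisely of the step you dispose of in one sentence: exhibiting an $\bfa$-invariant periodic family of grim reapers in \emph{general position}, i.e.\ (i) a uniform positive lower bound on the distance between any two asymptotic planes and (ii) no three grim reapers meeting along a common line. This is exactly where the hypothesis $a_x \neq \pm \pi/q$ enters, and your account of its role is incorrect: it has nothing to do with a translate being carried back onto itself, nor with transversality (two distinct translates of $\wilde\Gamma$ that meet always do so transversally in a single line, by strict convexity of $-\ln\cos x$), nor with a degeneration of a parameter-to-obstruction map in the gluing analysis. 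If $a_x = \pm\pi/q$, then after $q$ translates two grim reapers share an asymptotic plane, so (i) fails for every choice of configuration; conversely, for admissible $a_x$ the paper checks (i) by computing the possible plane distances ($a_x$, $\pi-(K-1)a_x$, $Ka_x-\pi$, with $K$ minimal such that $Ka_x>\pi$), and checks (ii) by the collinearity argument: a triple intersection at $p$ would force the single grim reaper $\wilde\Gamma + j\bfa$ to pass through the three collinear points $p$, $p+(j-i)\bfa$, $p+j\bfa$, which is impossible. Your proposal never verifies (i) or (ii), and without them the desingularization theorem cannot be invoked at all.

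Two further points are missing or misplaced. First, the case $a_x > \pi$ needs separate treatment: a single grim reaper per period yields disjoint translates, so one must pack $K$ grim reapers into each period cell, and (to make $\bfa$ a genuine period rather than a multiple of $\bfa/K$) perturb them by small distinct offsets $\mathbf d_i$; your proposal does not address this. Second, the flexibility you single out as the main obstacle is handled in the paper not by a surjectivity computation whose nondegeneracy depends on $\bfa$, but by a constructive bottom-up ``unbalancing'' of the tangent tetrads at the intersection points, fixing well-chosen bottom edges and letting the free upper ends absorb the dislocations; it works for \emph{every} general-position periodic configuration, independently of the arithmetic of $a_x$. So the excluded values of $a_x$ constrain the existence of the initial configuration, not the linear analysis, and tying them to the obstruction map is a genuine error in the logic of your argument.
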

	
The surfaces from Theorem \ref{thm:cor} show that in $\R^3$, self-translating surfaces can have cubic area growth. We contrast this with a special case of the result from \cite{ding-xin;volume-growth}, which gives quadratic area growth for any complete noncompact self-shrinking surface in $\R^3$.

Theorem \ref{thm:cor} is a corollary our main result, Theorem \ref{thm:main}. As an extension of the construction from \cite{mine;self-trans}, we show below that given a periodic configuration $G=\bigcup_{j \in \mathbf Z} \bigcup_{n=1}^{N_{\Gamma}}(\wilde \Gamma_n + j \bfa)$ in general position, it is possible to desingularize $G$ to obtain self-translating surfaces. One proves Theorem \ref{thm:cor} by choosing an adequate family $\bigcup_{n=1}^{N_{\Gamma}}\wilde \Gamma_n $ depending on $\bfa$. Note that if $a_x = \pm \frac{\pi}{q}$, we would immediately have coinciding asymptotic planes after $q$ periods and $G$ would not be in general position as defined below. Let us assume from now on that $\bfa \cdot \vec e_z=0$ and $N_{\Gamma}$ is finite.

\begin{definition}
\label{def:gen-pos}
A periodic family of grim reapers $G=\bigcup_{j \in \mathbf Z} \bigcup_{n=1}^{N_{\Gamma}}(\wilde \Gamma_n + j \bfa)$ is said to be in \emph{general position} if it satisfies the following requirements:
	\begin{enumerate}
	\item there exists a constant $\eps>0$ such that the distance between any two asymptotic planes to $G$ is at least $\eps$,
	\item no three grim reapers intersect on the same line.
	\end{enumerate}
\end{definition}

Let us denote by $\delta$ the minimum of the distance between two intersection points of $G \cap \{z=0\}$ and by $\delta_{\Gamma}$ the minimum measure of the angles formed by intersecting grim reapers and of the angles the grim reapers form with $\vec e_y$ at the intersection points. Note that both $\delta$ and $\delta_{\Gamma}$ are positive because there are finitely many intersections within a period.

We denote by $N_{I}$ the number of intersection lines within a period of $G$ (i.e. between the planes $x=c$ and $x=c+\bfa \cdot \vec e_x$, where $c$ is taken generically). The number $N_I$ is different from the number of intersections of $\bigcup_{n=1}^{N_{\Gamma}}(\wilde \Gamma_n + j \bfa)$. We associate a positive integer $m_k$ to each intersection line, which allows us to take different scales at different intersections, making the construction as general as possible.

\begin{theorem}
\label{thm:main}
Given a periodic family of grim reapers  $G= \bigcup_{j \in \mathbf Z} \bigcup_{n=1}^{N_{\Gamma}}(\wilde \Gamma_n + j \bfa)$ in general position, there is a one parameter family of surfaces $\{ \wilde{\mcal}_{\bar \tau} \}_{\bar \tau \in (0, \delta_{\bar \tau})}$, with $\delta_{\bar \tau}$ depending only on $N_I$, $\max_k(m_k)$, $\delta$, and $\delta_{\Gamma}$ satisfying the following properties:
 	\begin{itemize}
	\item $\wilde \mcal_{\bar \tau}$ is a complete embedded surface satisfying \eqref{eq:soliton}.
	\item $\wilde \mcal_{\bar \tau}$ is singly periodic of period $2 \pi \bar\tau$ in the $z$-direction.
	\item $\wilde \mcal_{\bar \tau}$ is invariant under the translation $(x,y,z) \mapsto (x + \bfa \cdot \vec e_x, y+ \bfa \cdot \vec e_y, z)$.
	\item If $U$ is a neighborhood in $\R^2$ such that $U \times \R$ contains no intersection line, then $\wilde \mcal_{\bar \tau} \cap (U \times \R)$ converges uniformly in $C^r$ norm, for any $r<\infty$, to $G \cap ( U \times \R)$ as $\btau \to 0$. 
	\item If $T_{kj}$ is the translation parallel to the $xy$-plane that moves the $k$th intersection line of the $j$th period to the $z$-axis, then $\btau^{-1} T_{kj} (\wilde \mcal_{\btau})$ converges uniformly in $C^r$, for any $r<\infty$, on any compact set of $\R^3$ to a Scherk surface with $m_k$ periods between $z=0$ and $z=2 \pi$ as $\btau \to 0$. 
	\end{itemize}
\end{theorem}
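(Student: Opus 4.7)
The plan is to extend the desingularization construction of \cite{mine;self-trans} to the doubly periodic setting $G$. The first step is to build a one-parameter family of approximate solutions $\mcal_{\btau}$ by gluing a rescaled Scherk surface into $G$ at each intersection line. For the $k$-th intersection line within a period, identify the two transverse planes tangent to the two meeting grim reapers with the asymptotic configuration of a Scherk surface having $m_k$ periods in the relevant interval; rescaling that surface by $\btau$ and interpolating via cutoff functions against the grim reaper pieces produces $\mcal_{\btau}$. By construction, $\mcal_{\btau}$ is $2\pi\btau$-periodic in $z$ (since the unscaled Scherk surface has period $2\pi$ in $z$) and invariant under the translation by $\bfa$. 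Outside the gluing neighborhoods, of diameter on the order of $\btau$, equation \eqref{eq:soliton} is satisfied exactly, while inside the scaling $\mcal_{\btau} \mapsto \btau^{-1} \mcal_{\btau}$ turns the soliton equation into a small perturbation of the minimal surface equation, so the initial error $H - \vec e_y \cdot \nu$ is controlled by a positive power of $\btau$ in a suitable weighted H\"older norm.

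The second and main step is the linear theory. Seeking the true soliton as a normal graph $\phi$ over $\mcal_{\btau}$ leads to a quasilinear equation whose linearization is the Jacobi operator $\lin = \Delta + |A|^2 - \vec e_y \cdot \nabla$ on $\mcal_{\btau}$. To invert $\lin$ uniformly in $\btau$, one must address its approximate cokernel, which is spanned by bounded Jacobi fields arising from translating the asymptotic planes of each Scherk piece. Following the Kapouleas-type paradigm used in \cite{mine;self-trans}, one decouples low Fourier modes in $z$ on the grim reaper pieces from high modes, inverts the high-mode part directly via separation of variables, and treats the obstructing low-mode part by matching it against explicit geometric deformations of $G$---namely perturbations of the shift parameters $\tilde b_n$, $\tilde c_n$ of the underlying grim reapers $\wilde\Gamma_n$. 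The main obstacle, and the content of the flexibility statement that this paper actually proves, is to show that in the periodic setting these deformations still produce enough linearly independent Jacobi fields to span the cokernel, despite the additional linear constraints imposed by invariance under translation by $\bfa$. Granting this dimension count, $\lin$ becomes invertible modulo a finite-dimensional parameter space with norm bounds uniform in $\btau$.

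With uniform linear estimates in hand and the nonlinearity quadratic in $\phi$ and its derivatives on a small ball of the chosen function space, a standard contraction mapping argument yields, for every sufficiently small $\btau$, a pair $(\phi_{\btau},\text{parameters})$ solving \eqref{eq:soliton}; the admissible range of $\btau$ depends only on $N_I$, $\max_k m_k$, $\delta$, and $\delta_{\Gamma}$ because these are the only quantities entering the error and linear estimates. The resulting $\wilde\mcal_{\btau}$ inherits the $z$-periodicity and $\bfa$-invariance from $\mcal_{\btau}$; it is embedded for small $\btau$ because the rescaled Scherk pieces are embedded and the gluing neighborhoods are disjoint thanks to the general position hypothesis; and the two convergence statements follow directly from the construction, since away from the intersection lines $\mcal_{\btau}$ coincides with $G$ while $\phi_{\btau}$ decays to zero in $C^r$, and at the blown-up scale $\btau^{-1} T_{kj}(\mcal_{\btau})$ agrees by design with the Scherk surface with $m_k$ periods between $z=0$ and $z=2\pi$.
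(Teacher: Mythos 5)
Your overall framework (approximate solutions by gluing rescaled Scherk surfaces, uniform linear theory modulo a finite-dimensional obstruction, contraction mapping) is indeed the skeleton of the proof, but the paper treats all of that as imported wholesale from \cite{mine;self-trans}; the only genuinely new step that this paper must and does supply is exactly the one you wave away with ``Granting this dimension count.'' The obstruction coming from the kernel of $L=\Delta+|A|^2$ on each desingularizing piece (the modes $\vec e_x\cdot\nu$ and $\vec e_y\cdot\nu$, after the $z\mapsto -z$ symmetry kills $\vec e_z\cdot\nu$) is neutralized not by perturbing the shift parameters $\tilde b_n,\tilde c_n$ of the grim reapers as global Jacobi fields, but by \emph{unbalancing}: prescribing, independently at every intersection point $p_k$ within a period, two small angles $\theta_{k,1},\theta_{k,2}$ by which the tetrad of directing vectors fails to consist of opposite pairs, and then showing that there exists an embedded, $\bfa$-periodic configuration $\overline G$ realizing all of these prescribed angles simultaneously, with a uniform embeddedness threshold $\delta'_\theta$. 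This is a global consistency problem, not a local one: changing the tangent directions at one intersection moves the emanating edges, hence moves the neighboring intersections, and so on through the configuration.

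In the finite case of \cite{mine;self-trans} this propagation was handled by fixing the left ends and pushing the accumulated (finitely many) errors to the right; in the periodic setting that scheme breaks down because there is no ``rightmost'' grim reaper and naive propagation wraps around and feeds back into the constraints. The paper's proof replaces it with a bottom-up algorithm: it fixes the bottom $\overline{\mathcal B}_1$ of the configuration, assigns a finite \emph{level} to each intersection point, reconstructs the intersections level by level (the free upper ends and rays absorbing the dislocations), and iterates this in finitely many parts $\overline G_1,\overline G_2,\dots$ until all grim reapers are used, invoking the smooth dependence of intersection points on directing vectors to get the uniform bound $\delta'_\theta$ depending only on $N_I$, $\delta_\Gamma$, $\eps$. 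Your proposal contains no argument for this step --- no mechanism for resolving the periodic constraints, no reason the prescribed unbalancing at all $N_I$ intersections can be realized at once by an embedded periodic configuration --- so as written it assumes precisely the content of the theorem that goes beyond \cite{mine;self-trans}. (Minor additional inaccuracies: the fitting of the Scherk pieces slightly moves the grim reaper edges, so the initial surface is not an exact solution away from the gluing regions, and the model operator used for the small-eigenvalue analysis is $\Delta+|A|^2$ on the rescaled Scherk piece rather than the full drift operator; neither of these is the essential issue.)
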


The proof of Theorem \ref{thm:main} is similar to the one from \cite{mine;self-trans}, where we desingularized a finite family of grim reapers in general position. Many of the steps from the previous article (construction of desingularizing surfaces and initial approximate solutions, study of the associated linear operator, and application of fixed point theorem) are valid here provided one has enough flexibility to change the tangent vectors at each intersection slightly while keeping the initial configuration embedded. The flexibility is needed to deal with the small eigenvalues of the linear operator associated to $H-\vec e_y \cdot \nu$.  In \cite{mine;self-trans}, we started by fixing the left sides of the grim reapers and propagating the errors caused by the flexibility to the right. The family of grim reapers was finite, so the accumulation of errors was bounded. In the present proof, we move through the configuration from the bottom up. We start by fixing well chosen grim reaper edges at the bottom and let the free grim reaper ends absorb the movement from the dislocations at the intersections. 

This article is to be read as an extension of \cite{mine;self-trans} and we rely on the reader's familiarity  with either \cite{mine;self-trans} or \cite{kapouleas;embedded-minimal-surfaces}. The rest of the article is structured as follows. In the next section, we prove Theorem \ref{thm:cor} assuming Theorem \ref{thm:main}. In Section \ref{sec:init-conf}, we show that our periodic configuration has enough flexibility. In Section \ref{sec:perturbation}, we outline the main steps of the proof of Theorem \ref{thm:main}. 

\subsection*{Acknowledgment} The present paper is the result of a conversation with Sigurd Angenent on whether the mean curvature $H$ on $M \setminus B_R$ tends to $0$ as $R \to \infty$ for  a self-translating surface $M$. The author would like to thank him for many discussions and suggesting this counterexample. 


\section{Proof of Theorem \ref{thm:cor} given Theorem \ref{thm:main}}
\label{sec:proof-cor}

We show that for a vector $\bfa$ as in Theorem \ref{thm:cor}, we can find a periodic family of grim reapers in general position invariant under translation by $\bfa$. To simplify the proof, we assume without loss of generality that $a_x>0$.

\subsection*{Case 1: $\pi>a_x  > \pi/2$} We just take $G = \bigcup_{j \in \mathbf Z} \wilde \Gamma + j \bfa$. The $j$th grim reaper $\wilde \Gamma + j \bfa$ intersects only its neighbors $\wilde \Gamma + (j\pm 1) \bfa$ and the distance between two asymptotic planes is bounded below by $\min(\pi - a_x, 2a_x-\pi)$.

\subsection*{Case 2: $a_x> \pi$} Let $K$ be a positive integer such that $K \pi > a_x > K \pi/2$ (for example, $K := \left\lfloor \frac{a_x}{\pi} \right\rfloor +1$). We can fall back on the first case by considering $\bigcup_{j \in \mathbf Z} \left(\bigcup_{i=0}^{K-1} \wilde \Gamma + \tfrac{i + Kj}{K} \bfa \right)$ but the configuration is really invariant under translation by $\bfa/K$. To kill all periods smaller than $\bfa$, we take $\{\mathbf d_i\}_{i=0}^{K-1}$ a set of distinct vectors in $\R^3$ for which $\mathbf d_0=\mathbf 0$, $\mathbf d_i \cdot \vec e_z=0$, and $|\mathbf d_i| < \frac{1}{10}\min(\pi-\frac{a_x}{K}, \frac{2a_x}{K} - \pi) $ for $i=0,\ldots, K-1$. The configuration
	\[
	G = \bigcup_{j \in \mathbf Z} \left( \bigcup_{i=0}^{K-1} \wilde \Gamma + i \frac{\bfa}{K} + \mathbf d_i+ j \bfa \right)
	\]
yields a self-translating surface invariant under the translation by $\bfa$ if $\btau$ is small enough. Indeed, Theorem \ref{thm:main} states that the smaller $\btau$ is, the closer the constructed self-translating surface is to the initial configuration $G$. Therefore, for $\btau$ small enough, the self-translating surface will not be invariant under translations by $i \frac{\bfa}{K}$, $i=1, \ldots, K-1$.

\subsection*{Case 3: $\pi/2 > a_x>0$ with $a_x \neq \pm \frac{\pi}{q}, q \in \mathbf N$} In this case, the periodic family of grim reapers is just
	\[
	G= \bigcup_{j \in \mathbf Z} \wilde \Gamma + j \mathbf a.
	\]
We now show that it is in general position. Let $K$ be the smallest integer so that $K a_x> \pi$. From the conditions on $a_x$, $(K-1) a_x \neq \pi$ and the distance between any two asymptotic planes of $G$ is either $a_x, \pi-(K-1) a_x$, or $K a_x - \pi$. The last step is to prove that there are no triple intersection by contradiction. In what follows, we consider $G \cap \{ z=0\}$ to simplify the vocabulary.  Without loss of generality thanks to the periodicity, we can assume that the left-most grim reaper is $\wilde \Gamma$ and that for two integers $i,j$ with $0<i<j<K$, we have  $\wilde \Gamma \cap (\wilde \Gamma + i \bfa) \cap (\wilde \Gamma + j \bfa)=\{p\}$. Using the periodicity of $G$, we have that $(\wilde \Gamma + (j-i) \bfa) \cap (\wilde \Gamma+ j \bfa) = \{p + (j-i) \bfa\}$ and $(\wilde \Gamma + j \bfa) \cap (\wilde \Gamma + (j+i) \bfa) = \{p+ j \bfa\}$. The grim reaper $\wilde \Gamma + j \bfa $ passes through the three collinear points $p$, $p+(j-i)\bfa$, and $p+ j \bfa$, which is not possible. Therefore there can be no triple intersection.


\section{Flexibility of the initial configuration}
\label{sec:init-conf}

In this section, the third dimension does not add any information, so we work with cross-sections in the $xy$-plane. We start by giving uniform lower bounds on all the intersection angles. 
\begin{lemma}
\label{lem:2}
Given $G = \bigcup_{j \in \mathbf Z} \bigcup_{n=1}^{N_{\Gamma}}(\wilde \Gamma_n + j \bfa)$ a family of grim reapers in general position,  there exist positive numbers $\delta$ and $\delta_{\Gamma}$ such that
	\begin{itemize}
	\item  the four angles formed at the intersection of any two grim reapers are greater than $30\delta_{\Gamma}$,
	\item any tangent vector to a grim reaper at an intersection forms an angle greater than $30 \delta_{\Gamma}$ with $\vec e_y$,	
	\item the arc length distance on the grim reapers between any two intersection points of $G \cap \{z=0\}$ is greater than $2\delta$. 
	\end{itemize}
\end{lemma}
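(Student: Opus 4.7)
The plan is to exploit the translation invariance of $G$ to reduce to a single fundamental period, show that within that period $G\cap\{z=0\}$ has only finitely many intersection points, and then obtain all three positivity assertions by taking minima of finitely many positive numbers.

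First, I would restrict to a vertical strip $S = \{c<x\le c+a_x\}\times\R$ for a generic $c$. Each $\wilde\Gamma_n$ is a graph of horizontal extent $\pi$, so only finitely many translates $\wilde\Gamma_n+j\bfa$ meet $S$, giving a finite collection of grim reaper arcs to analyse. The heart of the argument is showing that any two distinct grim reapers in this collection meet in only finitely many points. Each is a graph $y=-\ln\cos(x-a)$ over the open interval $(a-\pi/2,a+\pi/2)$; the common $x$-domain of two such graphs is a bounded open interval $(\alpha,\beta)$ (when nonempty). By the general position hypothesis the asymptotes are separated by at least $\eps$, so at each endpoint $\alpha,\beta$ one graph diverges to $+\infty$ while the other remains bounded. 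Hence all intersection points lie in a compact subinterval of $(\alpha,\beta)$, and real analyticity combined with non-coincidence of the two curves makes this intersection set discrete; confined to a compact set it is therefore finite.

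With finiteness in hand, positivity of the three minima is routine. Transversality at each intersection comes from comparing tangent slopes $\tan(x_0-a)$ and $\tan(x_0-a')$, which coincide only when $a\equiv a'\pmod\pi$; coincident asymptotes are excluded by general position, and a nonzero $\pi$-shift forces the $x$-domains to be disjoint, so every crossing angle is strictly positive. Finiteness of $\tan(x_0-a)$ at any interior point of the domain ensures that the angle with $\vec e_y$ at each intersection is strictly positive, and distinct intersection points on a common arc are separated by strictly positive arc length. Picking $\delta_\Gamma$ and $\delta$ to be, say, $\tfrac{1}{60}$ of the respective minima produces constants satisfying the stated strict inequalities, and periodicity propagates the bounds to all of $G$.

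The only real obstacle is the finiteness step: \emph{a priori} intersection points could accumulate toward an asymptote, and it is exactly the $\eps$-separation of asymptotic planes in the definition of general position that rules this out.
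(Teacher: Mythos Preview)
Your argument is correct and follows the same overall strategy as the paper: use the $\bfa$-periodicity to reduce to a finite configuration, and then obtain the three bounds as minima over finitely many positive quantities. The paper carries this out by taking $K$ consecutive periods (with $K$ large enough that $K|a_x|$ exceeds the horizontal spread of the $\wilde\Gamma_n$'s by at least $\pi$), observing that the resulting finite family is itself in general position, and then invoking Lemma~3 of \cite{mine;self-trans}, where the finite case was already proved. You instead give a self-contained argument for the finite case: the analyticity of the grim reaper graphs together with the $\eps$-separation of asymptotes forces the intersection set of any two of them to be discrete and to lie in a compact subinterval of the common $x$-domain, hence to be finite; the explicit tangent-slope comparison then gives transversality. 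Your route avoids the external citation and makes the role of condition~(i) in Definition~\ref{def:gen-pos} transparent, at the cost of a few extra lines; the paper's route is shorter but relies on the reader having \cite{mine;self-trans} at hand. One cosmetic point: your strip should have width $|a_x|$ rather than $a_x$, and your parametrization $y=-\ln\cos(x-a)$ suppresses the vertical shift $\tilde c_n$, though this does not affect the argument.
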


\begin{proof}
Let us recall that each $\wilde \Gamma_n$ is the surface $\wilde \Gamma$ from \eqref{eq:Gamma} shifted by $(\tilde b_n, \tilde c_n, 0)$. Let $K$ be the smallest integer for which 
	\[
	K |a_x| \geq \left(\pi+ \max_{1 \leq n \leq N_{\Gamma}} \tilde b_n - \min_{1 \leq n \leq N_{\Gamma}} \tilde b_n\right).
	\]
In Lemma 3 in \cite{mine;self-trans}, we proved these properties for a finite family of grim reapers in general position. Lemma \ref{lem:2} here follows because $\bigcup_{1 \leq j \leq K} \bigcup_{n=1}^{N_{\Gamma}}(\wilde \Gamma_n + j \bfa)$ is a finite family in general position and contains all the relevant intersection points. Note that part (i) of Definition 2 in \cite{mine;self-trans} requires that $|\tilde b_n - \tilde b_m - k \pi| > \eps$ for $n\neq m$ and for all integers $k$. This is more restrictive than necessary and can be replaced by (i) from Definition \ref{def:gen-pos} without any changes in the rest of \cite{mine;self-trans}. Moreover, the angle $\pi/2 - 30 \delta_{\Gamma}$ should be replaced by $\pi/2$ in (i) Lemma 3 of \cite{mine;self-trans}. 
\end{proof}

\subsection{Construction of a flexible initial configuration}

There are exactly four unit tangent vectors emanating from each intersection point $p_k$, $k = 1, \ldots, N_I$. We  denote them by $v_{k1}$, $v_{k2}$, $v_{k3}$, and $v_{k4}$, where the number refers to the order in which they appear as we rotate from $\vec e_y$ counterclockwise. The goal of this section is to perturb $G$ into a configuration $\overline G$ for which the tetrad of  directing vectors $\bar T_k=\{\bar v_{k1}, \bar v_{k2}, \bar v_{k3}, \bar v_{k4}\}$ satisfies
	\[
	\theta_1(\bar T_k) : = \angle(-\bar v_{k1}, \bar v_{k3}) = 2 \theta_{k,1}, \quad \theta_2 (\bar T_k) : = \angle (-\bar v_{k2}, \bar v_{k4}) = 2 \theta_{k,2},
	\]
where $\theta_{k,1}$ and $\theta_{k,2}$ are two small angles which measure how much the pairs of vectors $(\bar v_{k1},\bar  v_{k3})$ and $(\bar v_{k2}, \bar v_{k4})$  fail to point in opposite directions.
The process of changing the directing vectors at an intersection is called \emph{unbalancing} and will help us deal with small eigenvalues of the linear operator $L=\Delta+|A|^2$, which is the operator associated to normal perturbations of the mean curvature $H$. As we scale down the desingularizing Scherk surfaces to fit into a small neighborhood of the intersection lines, the contribution of $H$ dominates and the equation $Lv=E$ will capture the behavior of the linearized equation associated to \eqref{eq:soliton} on the desingularizing surface. The operator $L$ has a kernel of dimension $3$ generated by $\vec e_x \cdot \nu$, $\vec e_y \cdot \nu$, and $\vec e_z \cdot \nu$. We can rule out $\vec e_z \cdot \nu$ by imposing a symmetry, namely that the final surface be invariant under the reflection $z \mapsto -z$. To deal with the other two eigenfunctions at each intersection, one can add or subtract a linear combination of eigenfunctions to the inhomogeneous term E in order to land in the
space perpendicular to the kernel, where $L$ is invertible. The key to a successful
construction is to be able to generate these linear combinations by slight unbalancing of the
 initial configuration.

\begin{proposition}
Given $\ubar \theta:=\{ \theta_{k,1}, \theta_{k,2}\}_{k=1}^{N_I}$, one can perturb $G$ into a configuration $\overline G$ so that the tetrad of directing vectors $\bar T_k=(\bar v_{k1}, \bar v_{k2}, \bar v_{k3}, \bar v_{k4})$ at $\bar p_k$, the $k$th intersection point of $\overline G$, satisfies $\theta_{1}(\bar T_k) = \theta_{k,1}$ and $\theta_2(\bar T_k) = \theta_{k,2}$ for $k = 1, \ldots,N_I$. Moreover, there exists a constant $\delta'_{\theta}>0$ depending only on $N_I$, $\delta_{\Gamma}$, and $\eps$ such that $\overline G$ is embedded if $|\ubar \theta|<\delta'_{\theta}$.
\end{proposition}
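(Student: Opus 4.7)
My plan is to regard $\overline G$ as a union of edges, one for each arc of $G$ bounded by consecutive intersection points, where each edge is allowed to be a small translate $\wilde\Gamma|_{\mathrm{arc}}+(b,c,0)$ of the corresponding arc of $\wilde\Gamma$. In the unperturbed $G$, all edges belonging to a single grim reaper share the same pair $(b,c)$, and this is exactly why all tetrad angles vanish. The geometric observation that drives the argument is that the tangent direction of $\wilde\Gamma+(b,c,0)$ at the point with $x$-coordinate $x_0$ is $(\cos(x_0-b),\sin(x_0-b))$, so if two such translates with horizontal shifts $b_1$ and $b_2$ meet at a common point, the angle between their tangent lines equals $|b_1-b_2|$. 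Consequently, at any intersection $\bar p_k$ of $\overline G$, the two unbalancing angles $\theta_1(\bar T_k)$ and $\theta_2(\bar T_k)$ are prescribed, to leading order, by differences of the horizontal shifts across $\bar p_k$ of the two edges on each of the two grim reapers meeting there.

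Next I would orient each grim reaper by increasing $y$ so that, by Lemma~\ref{lem:2}, at every intersection exactly two edges enter from below and two leave going up. Topologically order the $N_I$ intersections of a fundamental period from bottom to top, and fix the shifts of a chosen collection of bottom-most edges---together with edges identified under the horizontal period $\bfa$---to their original values in $G$. Processing the intersections in that order, at each $\bar p_k$ the two incoming shifts are already determined, and the system consisting of the two conditions $\theta_1(\bar T_k)=\theta_{k,1}$, $\theta_2(\bar T_k)=\theta_{k,2}$ together with the requirement that the two outgoing edges pass through the point $\bar p_k$ (itself defined as the intersection of the two incoming edges) yields four equations in the four unknowns $(b,c)$ of those two outgoing edges. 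The linearization at $\ubar\theta=0$ is invertible, with off-diagonal entries of order $\sin$ of the intersection angles, which are uniformly bounded below by Lemma~\ref{lem:2}. An inductive application of the implicit function theorem therefore produces a unique solution $\overline G=\overline G(\ubar\theta)$, smooth in $\ubar\theta$, vanishing at $\ubar\theta=0$, and realizing the prescribed tetrad angles by construction.

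Embeddedness of $\overline G$ for $|\ubar\theta|<\delta'_\theta$ then follows from a uniform bound of the form $C(N_I,\delta_\Gamma)\,|\ubar\theta|$ on the total perturbation of every edge---where the propagation depth $N_I$ and the angular nondegeneracy $\delta_\Gamma$ enter through the product of successive Jacobian norms---combined with the uniform lower bounds $\delta$ on arc-length spacing, $30\delta_\Gamma$ on intersection angles, and $\eps$ on asymptotic-plane separation from Lemma~\ref{lem:2} and Definition~\ref{def:gen-pos}. The main obstacle I anticipate is ensuring that the bottom-up propagation closes up consistently with the horizontal periodicity by $\bfa$, since the edge graph modulo $\bfa$ contains cycles that could force incompatible shifts on periodically identified edges. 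Following the strategy indicated in the introduction, the remedy is to fix just one representative of each periodic orbit among the bottom edges and let the accumulated dislocations be absorbed by the unconstrained upper ends of the grim reapers, which extend to $y=+\infty$. A parameter count---two shift parameters per edge against two angular constraints per intersection---shows the system is balanced on each fundamental period, and the planar, transverse structure of grim reapers in general position should preclude cycle-induced conflicts.
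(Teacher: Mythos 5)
Your mechanism at each intersection coincides with the paper's: the two lower directing vectors are inherited from the part of the configuration already built, the two upper ones are obtained from them via the prescribed angles $\theta_{k,1},\theta_{k,2}$, each upper branch is then the unique grim reaper translate through the (newly determined) intersection point with the prescribed tangent, and smooth dependence yields $\delta'_{\theta}$ (your explicit $(b,c)$ formulas replace the paper's citation of Proposition 5 of \cite{mine;self-trans}). The genuine gap is the step you yourself flag and then defer: that after fixing ``a chosen collection of bottom-most edges'' the bottom-to-top pass always finds both incoming shifts already determined, and that no edge is assigned a shift twice once everything is read modulo $\bfa$. This is not a loose end; it is what the paper's proof actually consists of. The paper fixes the entire lower envelope $\overline{\mathcal B}_1$, assigns each intersection point a combinatorial level (Definition \ref{def:level}), observes that every intersection involving a grim reaper with no edge on that envelope has level $\infty$ and hence cannot be reached by a single upward pass, and then runs a finite outer induction (Parts $2,\dots,m$) in which a bounded connected component $\mathcal B'_m$ of the bottom of the still-untouched grim reapers is adjoined and frozen before propagation resumes, the fixed pair of directions at the junction points no longer being the second and third ones. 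Your parameter count and the statement that the planar, transverse structure ``should preclude cycle-induced conflicts'' establish neither the well-foundedness of your ordering nor the single-valuedness of the shift assignment; moreover, Lemma \ref{lem:2} only bounds angles with $\vec e_y$, so it does not give ``exactly two edges enter from below'': general position as in Definition \ref{def:gen-pos} allows an intersection at a grim reaper vertex, where that curve contributes two upward branches.

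For what it is worth, your variant can be completed by an accounting the paper does not use and which is leaner than its multi-part induction: fix, for each grim reaper modulo $\bfa$, the unique edge or ray containing its vertex; every other edge is strictly monotone in $y$, hence is an upper branch at exactly one of its endpoints, so processing intersections in order of increasing height determines each remaining shift exactly once, and consistency in the quotient by $\bfa$ is automatic because only the vertex edges are ``incoming at both ends'' and those are exactly the frozen ones. But that argument---which is precisely the missing content---does not appear in your write-up, so as it stands the proposal has a real gap at the combinatorial heart of the proposition.
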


 Before starting the proof, we give some terminology. We say that two points $p$ and $q$ are equivalent and write $p \sim q$ if there is a $k \in \mathbf Z$ for which  $p + k \bfa =q$. In the proof below, we take the quotient of the configurations with respect to this equivalence and denote both the configuration and the corresponding quotient by the same letter. The \emph{edges} of a configuration $G$ are the closures of the bounded connected pieces of $G \setminus \{\textrm{intersection points of }G\}$. The \emph{rays} are the closure of the unbounded pieces. We say two edges (rays, or grim reapers) are equivalent when one is the image of the other under translation by a multiple of $\bfa$.

In the case of finitely many grim reapers, we started by unbalancing at intersection points on the left side and propagated the perturbations to the right. Because there were finitely many intersection points, we accumulated finitely many error terms, which could then be bounded. Here, the strategy is to fix a low edge on each grim reaper then modify the intersection points and pieces of grim reapers as we move upward.

\begin{proof} In this proof, we denote with an overline sets of edges that will not be changed further. We will build the final configuration $\overline G$ in pieces, starting with $\overline{\mathcal B}_1$, then $\overline G_1$, $\overline{\mathcal B}_2$, $\overline G_2$ ... in finite number of steps.  	

{\bf Part 1.} We start by defining the bottom of the configuration 
	\begin{equation}
	\label{eq:bottom1}
	\overline{\mathcal B}_1= \{ q \in G \mid y(q) = \min_{q' \in G, x(q')= x(q)} y(q')\}.
	\end{equation}
Because $G$ is periodic and connected, $\overline{\mathcal B}_1$ is connected and composed of edges from distinct grim reapers. We can assume by renumbering if necessary, that $\overline{\mathcal B}_1$ contains edges from the first $N_{1}$ grim reapers and also contains the first $N_{1}$ intersection points, $1 \leq N_{1} \leq N_{\Gamma}$. All the intersection points $\bar p_k:=p_k$, $k =1, \ldots, N_{1}$ are now fixed as well as the second and third directing vectors, $\bar v_{k2}:=v_{k2}$ and $\bar v_{k3}:=v_{k3}$ for $k =1, \ldots, N_{1}$. This part 1 deals with the intersection points of the first $N_1$ grim reapers and ignores the rest of the configuration until the definition of $\overline{G}_1$. 

\begin{definition}
\label{def:level}
For an intersection point $p \in G$, we define its \emph{level} (see Figure \ref{fig1}):
\begin{itemize}
\item Level $1$: an intersection point is at \emph{level $1$} if it is in $\overline{\mathcal B}_1$. 
\item Level $j$: an intersection point is at \emph{level $j$} if it is not in any level below $j$ and if it is the endpoint of two different edges with endpoints at level $i_1, i_2 \leq j-1$. 
\item Level $\infty$: an intersection point is at  \emph{level $\infty$} if it is not at any finite level.
\end{itemize}
\end{definition}
The number of nonempty levels is finite because the number of intersection points is finite. Note also that an intersection point $\{p\} = \wilde \Gamma_n \cap \wilde \Gamma_{n'}$ has finite level if and only if $n, n' \in \{1, \ldots, N_1\}$. 

\begin{figure}
\includegraphics[width=\textwidth]{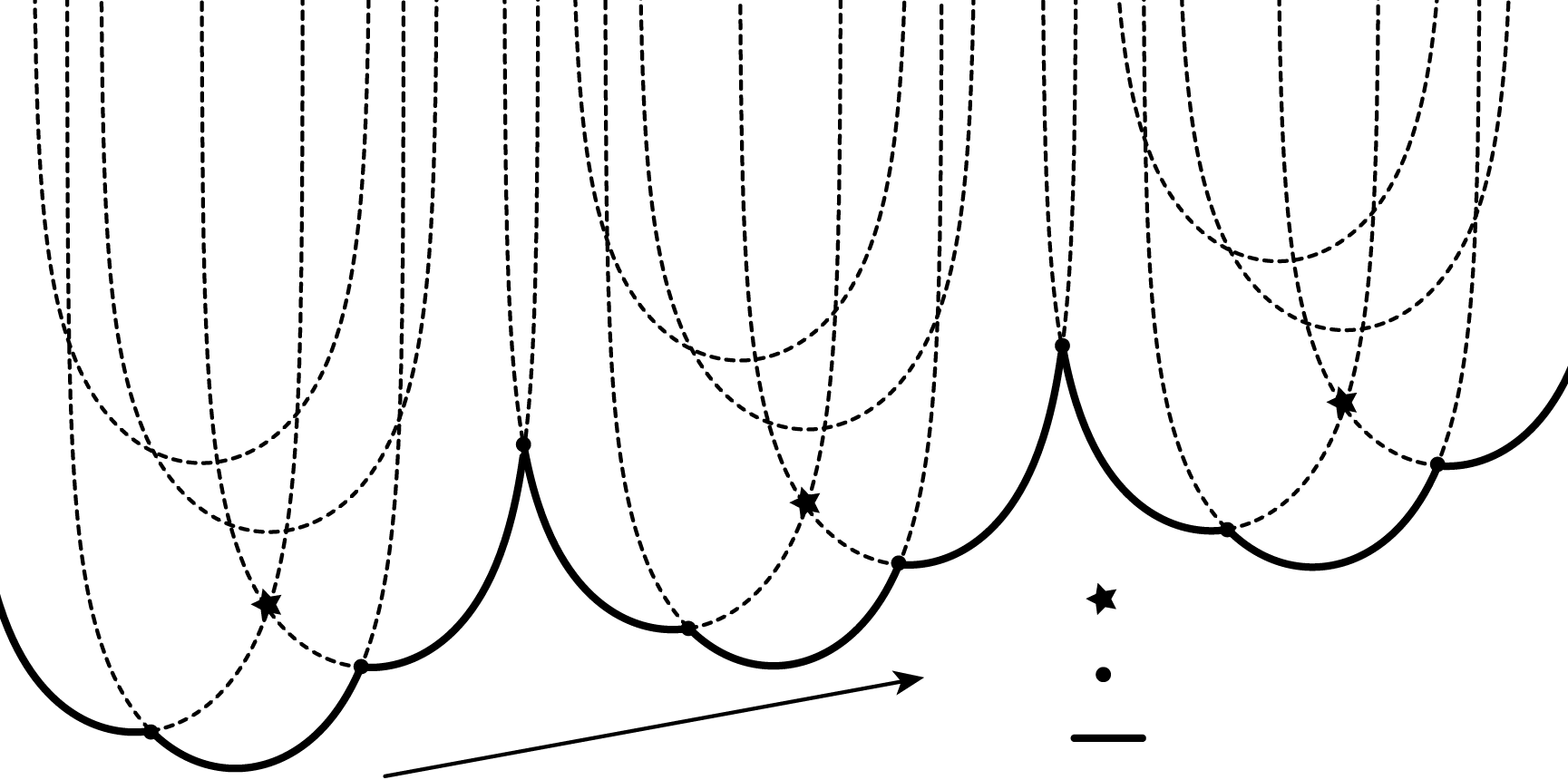}
\caption{Levels of the points of intersection in Part 1. The intersection points not marked are at level $\infty$.}
\label{fig1}
\begin{picture}(0,0)
\put(84,83){level 2 point }
\put(84,67){level 1 point}
\put(90, 51){$\overline{\mathcal B}_1$}
\put(0, 50){$\vec a$}
\end{picture}
\end{figure}

\emph {Algorithm for constructing a partial flexible configuration.}
We now show how to construct the new level $j$ intersection points assuming that the points $\bar p_k$ corresponding to intersections of level $j-1$ or lower are well-defined as well as their directing vectors $\bar v_{k2}$ and $\bar v_{k3}$. At these $\bar p_k$,  the directions $\bar v_{k1}$ and $\bar v_{k4}$ are determined by $\theta_{k,1}$ and $\theta_{k,2}$. We attach two pieces of grim reapers at $\bar p_k$, with tangent directions given respectively by $\bar v_{k1}$ and $\bar v_{k4}$. The intersection of the new pieces of grim reapers give us intersection points $\bar p_l$ corresponding to a level $j$ point $p_l \in G$. The second and third directing vectors at $\bar p_l$ are given by the newly modified grim reaper pieces. 

Using the algorithm, we construct new intersection points corresponding to points of $G$ with finite level. We complete Part 1 by finding the directions $\bar v_{k1}$ and $\bar v_{k4}$ for all the points with highest finite level and attaching modified pieces of grim reapers with tangents in these directions. We now have a flexible adjustment $G_1$ of $\bigcup_{j\in \mathbf Z}\bigcup_{n=1}^{N_1} \wilde \Gamma_1$ given the unbalancing at the intersection points of finite level. The configuration $G_1$ contains all the finite level modified intersection points and all the modified edges and rays emanating from these points. If there is no unbalancing, $G_1$ would just be $ \bigcup_{j\in \mathbf Z}\bigcup_{n=1}^{N_1} \wilde \Gamma_1$. We define $G_2^{\ast} :=\bigcup_{j\in \mathbf Z} \bigcup_{n=N_1+1}^{N_{\Gamma}} \wilde \Gamma_n + j \vec a$ to be the union of the other grim reapers and 
	\[
	\overline G_1 = \text{the connected component of } G_1 \setminus G_2^{\ast} \text{ containing }\overline{\mathcal B}_1.
	\]
Figure \ref{fig1G1} shows $\overline G_1$ in the case where there is no unbalancing. With unbalancing,  the edges, rays, and position of intersection points of level $i$, $1<i<\infty$, would be slightly perturbed. Note that if $N_1 < N_{\Gamma}$, $\overline{G}_1$ is not closed because it does not contain the points at level infinity.

\begin{figure}
\includegraphics[width=\textwidth]{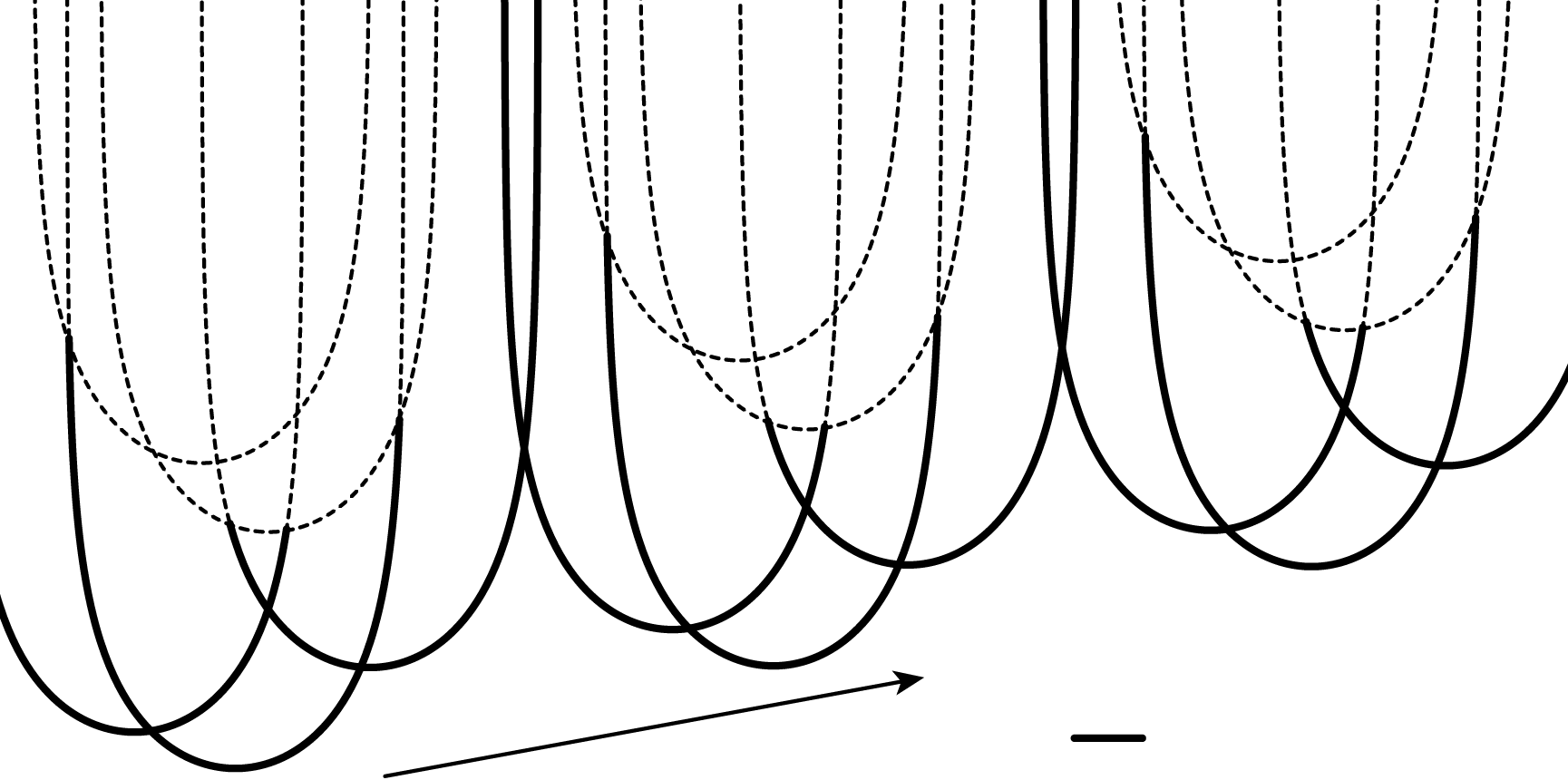}
\caption{The graph $\overline{G}_1$ when there is no unbalancing.}
\label{fig1G1}
\begin{picture}(0,0)
\put(90, 42){$\overline{G}_1$}
\put(0, 41){$\vec a$}
\end{picture}
\end{figure}

Let us assume that Part (m-1) has been completed and we obtained the partial configuration $\overline G_{i}$, $i=1, \ldots, m-1$ involving the grim reapers $\wilde \Gamma_n$, $n \leq N_{m-1}<N_{\Gamma}$. 

{\bf Part m.} We define  $G^{\ast}_m: = \bigcup_{j \in \mathbf Z} \bigcup_{n>N_{m-1}} \wilde \Gamma_n + j \bfa $ to be union of the remaining grim reapers and the bottom of $G^{\ast}_m$ by
	\[
	\mathcal B_m = \{ q \in G^{\ast}_m \mid y(q) = \min_{q' \in G, x(q')= x(q)} y(q')\}.
	\]
	
\begin{figure}
\includegraphics[width=\textwidth]{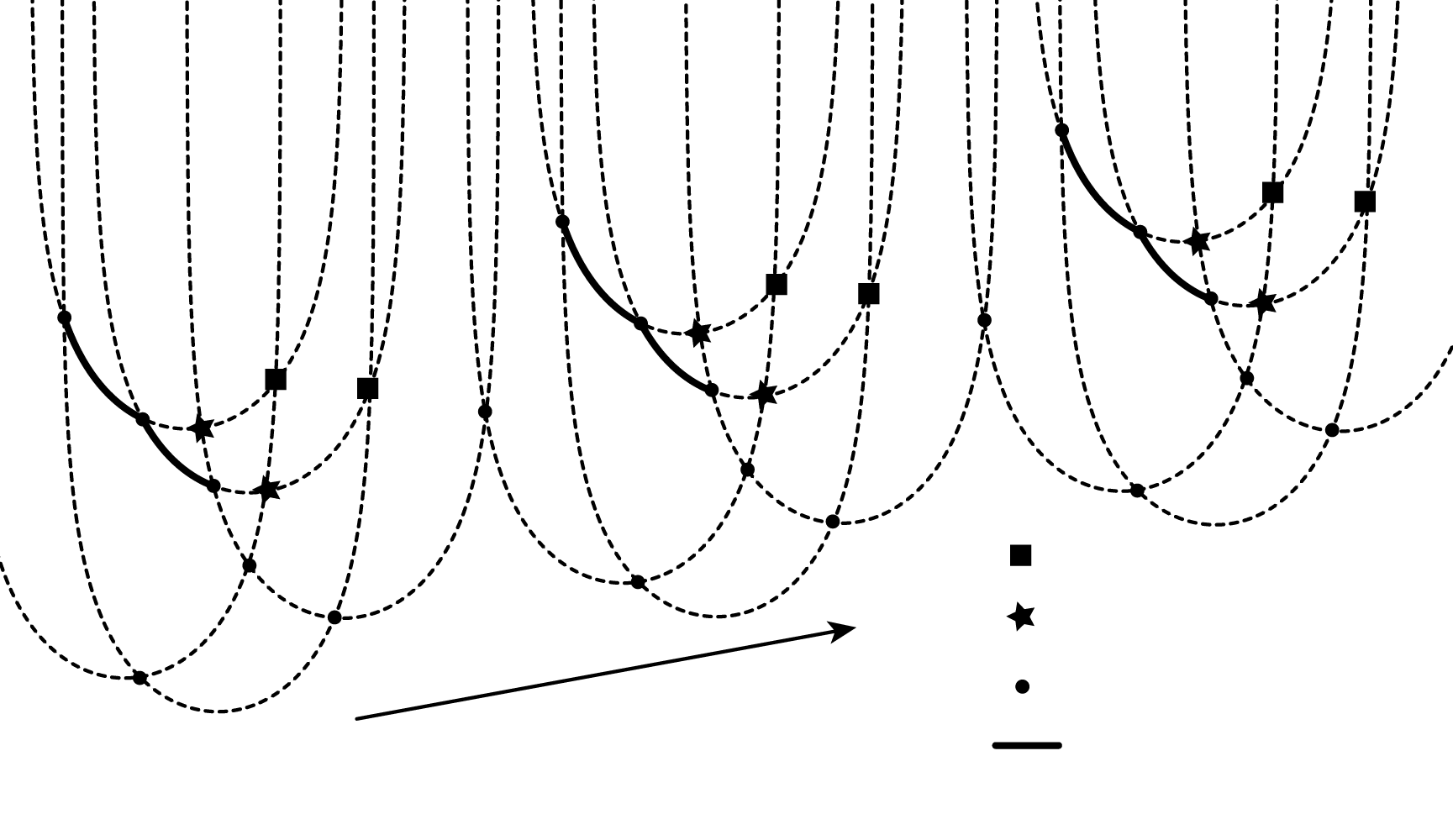}
\caption{Levels of the points of intersection in Part 2 for this choice of $\overline{\mathcal B}_2 = \mathcal B_2' \cup \overline{G}_1$. Not to overcrowd this figure, we assume there was no unbalancing in Part 1.}
\label{fig2}
\begin{picture}(0,0)
\put(84, 124){level 3 point}
\put(84,109){level 2 point }
\put(84,93){level 1 point}
\put(88, 78){one of three possible}
\put(90, 66)  { choices for ${\mathcal B}'_2$}
\put(0, 90){$\vec a$}
\end{picture}
\end{figure}

Let ${\mathcal B}'_m$ be the closure of an arbitrary bounded connected component of $\mathcal B_m \setminus (\text{the closure of } \overline G_{(m-1)})$ (see Figure \ref{fig2}). We define 
	\[
	\overline{\mathcal B}_m :=  \mathcal{B}'_m \cup \bigcup_{i=1}^{m-1} \overline G_{i}.
	\]
Any intersection point of  $\overline{\mathcal B}_m$ is either (i) in the interior of $\bigcup_{i=1}^{m-1} \overline G_{i}$, (ii) in the interior of $\mathcal{B}'_m$, or (iii) in $\mathcal{B}'_m \cap \text{closure}( \bigcup_{i=1}^{m-1} \overline G_{i})$. In case (i), we already have the correct unbalancing at the intersection; for points in case (ii) or (iii),  we fix the two directing vectors pointing to the interior of $\overline {\mathcal B}_m$. These vectors are not necessarily in the second and third directions but they are not opposite. We can assume by renumbering if necessary, that $\mathcal B_m$ contains edges from the grim reapers $\wilde \Gamma_n$, $N_{m-1} < n\leq N_m$. 

As in part 1, we want to redefine the level of each intersection point with respect to $\overline{\mathcal B}_m$. But $\overline{\mathcal B}_m$ is not a subset of $G$ so we unfortunately have to be more technical. We therefore define the level of each intersection point $p \in G$ by using Definition \ref{def:level} with $\overline{\mathcal B}_1$ replaced by the collection of edges and rays in $G$ that corresponds to edges or rays in $\overline{\mathcal B}_m$  (see Figure \ref{fig2}).

We can use the \emph {algorithm for constructing a partial flexible configuration} for constructing new level $j$ intersection points assuming that the intersection points  corresponding to level $(j-1)$ or lower are already defined, as well as two of their directing vectors. Although the two fixed directing vectors are not necessarily pointing in the second and third direction anymore, the algorithm is easily modified. We finish Part m by finding the last two directing vectors for all the points with highest finite level and attaching modified pieces of grim reapers with tangents in these directions. The resulting partial flexible configuration is denoted by $G_m$.  We define $G_{m+1}^{\ast} :=\bigcup_{j\in \mathbf Z} \bigcup_{n=N_m+1}^{N_{\Gamma}} \wilde \Gamma_n + j \vec a$ to be the union of the other grim reapers and 
	\[
	\overline G_m = \text{closure of the connected component of } G_m \setminus G_{m+1}^{\ast} \text{ containing }\overline{\mathcal B}_m.
	\]

There are finitely many parts because the number of intersection points in one period is finite and in each part, we modify at least one additional intersection point. The  existence of the constant $\delta'_{\theta}$ follows from Proposition 5 \cite{mine;self-trans}, which states the smooth dependence between the position of the intersection points, grim reaper positions, and directing vectors. 
\end{proof}


\section{Initial surfaces and the perturbation problem}
\label{sec:perturbation}

The rest of construction follows along the steps of \cite{mine;self-trans} is outlined them here.

We construct desingularizing surfaces to replace intersection lines by appropriately dislocating and bending original Scherk minimal surfaces. The dislocation is to ensure that the desingularizing surface can fit into an unbalanced configuration and to deal with small eigenvalues of the linear operator. The main purpose of the bending is to guarantee exponential decay of solutions to the linearized equation on this piece of surface.

We define a family of initial surfaces, which are approximate solutions to \eqref{eq:soliton}, by fitting appropriately bent and scaled Scherk surfaces at the intersection lines. The fitting of the desingularizing Scherk surfaces moves the edges and the grim reapers ends slightly; however, it does not change the position of the intersection lines so the periodicity can be preserved. The errors created can be controlled (see Proposition 33 and Corollary 34  \cite{mine;self-trans}).

We study the linear operator $\lin$ associated to normal perturbations of $H - \vec e_y \cdot \nu$. The linearized equation $\lin v =E$  can be solved, modulo the addition of correction functions to the inhomogeneous term. These correction functions can be generated by the flexibility build into $\overline G$ and the construction of the desingularizing surfaces (see Propositions 38 and 45 \cite{mine;self-trans}).

With the correct norms on the initial surfaces, we apply a fixed point theorem to find exact solutions (see Theorem 48 \cite{mine;self-trans}). The number of parameters introduced during the construction of the initial approximate solutions is finite because the number of intersections within a period is finite. At this point, it is worth noting that the obstacle that prevents us from constructing a infinite dimensional family of self-translating surfaces based on an initial configuration such as
	\[
	\bigcup_{j \in \mathbf Z} \wilde \Gamma + j \bfa + \bfb_j,
	\]
where $\bfb_j$ are vectors in $\R^2\times\{0\}$ with $|\bfb_j| < \delta$, is not the lack of flexibility but the fact that we would have infinitely many parameters. Even if the parameters are uniformly bounded, the current proof, which uses a fixed point theorem in the final step, would not apply here because a countably infinite product of compact intervals is not a compact Banach space.

\def\cprime{$'$}

\end{document}